\DeclareSymbolFont{cyrletters}{OT2}{wncyr}{m}{n}
\DeclareMathSymbol{\Sha}{\mathalpha}{cyrletters}{"58}
\title[]
{On Lara Rodr\'iguez' full conjecture for double zeta values in function fields\\
}
\author{Ryotaro Harada}
\address{Graduate School of Mathematics, Nagoya University, 
Furo-cho, Chikusa-ku, Nagoya 464-8602 Japan }
\email{m15039r@math.nagoya-u.ac.jp}
\date{January 17, 2017.}
\newtheorem{thm}{Theorem}
\newtheorem{cor}[thm]{Corollary}
\newtheorem{prop}[thm]{Proposition}  
\theoremstyle{remark}
\theoremstyle{definition}
\newtheorem{rem}[thm]{Remark}
\newtheorem{conj}[thm]{Conjecture}
\begin{document}
\bibliographystyle{amsalpha+}
\maketitle

\begin{abstract}
This paper discusses four formulae conjectured by J. A. Lara Rodr\'iguez on certain power series in function fields, which yield a 'harmonic product' formula for Thakur's double zeta values. We prove affirmatively the first two formulae. While we detect and correct errors in the last two formulae, and prove the corrected ones. 
%
\end{abstract}

\tableofcontents
\setcounter{section}{-1}
\section{Introduction}
Allegedly, in 1776, the double zeta values (multiple zeta values with depth 2) were firstly introduced by L. Euler in \cite{Eu} where he also described three types of relations for double zeta values with non-mathematical proofs and unconventional notations (they were reformulated with mathematical proofs and conventional modern notations in \cite{Ha}). It is said that the multiple zeta values were rediscovered after the silence of more than two centuries. 
In the last quarter century, it got known that they have connection to number theory (\cite{DG}, \cite{Z}), knot theory (\cite{LM}) and quantum field theory (\cite{BK}) and so on. Finding linear$\backslash$algebraic relations for multiple zeta values is one of our fundamental issues. Especially, the shuffle product formula and the harmonic product formula were discussed in detail in \cite{IKZ}.   

In 2004, the function field analogues of the multiple zeta values were invented by D. S. Thakur in \cite{Th}. He showed the existence of the 'harmonic product' formula for them in \cite{Th2010}. While in \cite{La}, J. A. Lara Rodr\'iguez conjectured its precise formulation in the case of depth $2$ with bounded weights. This conjecture contained five formulae. The first formula was proved by himself in \cite{La2012}. By using H. J. Chen's result in \cite{Ch}, we will prove affirmatively the second and third formulae in Theorem \ref{1} and \ref{2}. Whereas we detect and correct errors in the fourth and fifth formulae, and prove corrected ones in Theorem \ref{3} and \ref{4}.       
\section{Notations and Definitions}
\label{No}
\subsection{Notations}
We recall the following notation used in \cite{La2012}. 
\begin{itemize}
\setlength{\leftskip}{1.0cm}
\item[$q$] \quad  a power of a prime number $p$, $q=p^l$.  
\item[$\mathbb{F}_q$] \quad a finite field with $q$ elements.
\item[$A$] \quad the polynomial ring $\mathbb{F}_q[t]$.
\item[$A_{+}$] \quad the set of monic polynomials in $A$.   
\item[$A_{d^+}$]\quad the set of elements of $A_{+}$ of degree $d$.
\item[$\mathbb{F}_{q}(t)$] \quad the rational function field in the variable $t$.
\item[$\mathbb{F}_{q}((1/t))$]\quad the completion of $\mathbb{F}_{q}(t)$ at $\infty$.
\item[${\rm Int}(x)$]\quad $=\begin{cases}
				   0 & \text{if $x$ is not an integer,}\\
			         1 & \text{if $x$ is an integer. }
			        \end{cases}$ 
\end{itemize}
\subsection{Definition of multiple zeta values in $\mathbb{F}_{q}[t]$}
First we recall the power sums. For $s \in \mathbb{Z}$ and $d\in \mathbb{Z}_{\geq0}$, we write
\[
  S_d(s):=\sum_{a\in A_{d^+}}\frac{1}{a^s}\in\mathbb{F}_{q}(t).
\] 
For positive integers $s_1, s_2,\ldots, s_n\in\mathbb{Z}_{>0}$ and $d\in\mathbb{Z}_{\geq0}$, we put 
\[
  S_d(s_1, s_2, \ldots ,s_n) := S_d(s_1)\sum_{d>d_2>\cdots >d_n\geq0}S_{d_2}(s_2)\cdots S_{d_n}(s_n)\in\mathbb{F}_{q}(t).
\]
For $s \in \mathbb{Z}_{\geq 0}$, the {\it Carlitz zeta values}  are defined by
\[
  \zeta(s):=\sum_{a\in A_{+}}\frac{1}{a^s}\in \mathbb{F}_q((1/t)).
\]
Thakur generalized this definition to that of {\it multiple zeta values for} $\mathbb{F}_q[t]$ in \cite{Th}. For $s_1, s_2, \ldots, s_n\in\mathbb{Z}_{>0}$, 
\begin{align*}
  \zeta(s_1, s_2, \ldots, s_n)&:= \sum_{d_1>d_2> \cdots >d_n\geq0}S_{d_1}(s_1)\cdots S_{d_n}(s_n)\\
                                    &=\sum_{\substack{\deg a_1>\deg a_2>\cdots>\deg a_n\geq 0\\a_1, a_2, \ldots, a_n\in A_+}}\frac{1}{a^{s_1}_1\cdots a^{s_n}_n}\in \mathbb{F}_q((1/t)).
\end{align*}
For $a,b\in\mathbb{Z}_{>0}$, we define
\[
\Delta_d(a,b):=S_d(a)S_d(b)-S_d(a+b).
\]
H. J. Chen proved the following formula for the power sums in \cite{Ch} Theorem 3.1 and Remark 3.2.
\begin{prop}[Chen's formula]
For $r, s, d \in\mathbb{Z}_{\geq1}$, the following relation holds.
\[\Delta_d(r,s)=\sum_{\substack{ i+j=r+s\\q-1|j\\ i, j\geq 1}}\biggl\{ (-1)^{s-1}\binom{j-1}{s-1}+(-1)^{r-1}\binom{j-1}{r-1} \biggr\}S_d(i,j).
\]
Here we put $\binom{a}{b}=0$ for $a, b \in\mathbb{Z}_{\geq 0}$ with $a<b$. 
\end{prop}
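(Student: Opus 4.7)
The plan is to expand $\Delta_d(r,s)=\sum_{a\neq b,\,a,b\in A_{d^+}}1/(a^rb^s)$ via a two-variable partial fraction identity in $a$, $b$, and $c:=a-b$, and then to exploit a character-sum vanishing over $\mathbb{F}_q^{*}$ to produce the divisibility condition $q-1\mid j$.

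The key algebraic ingredient I would establish first is the identity
\[
\frac{1}{X^rY^s}=\sum_{j=1}^{s}\frac{(-1)^{s-j}\binom{r+s-j-1}{r-1}}{(X-Y)^{r+s-j}\,Y^j}+\sum_{i=1}^{r}\frac{(-1)^{s}\binom{r+s-i-1}{s-1}}{(X-Y)^{r+s-i}\,X^i},
\]
valid for $X\neq Y$. This follows by computing the residues of $1/((u+Y)^rY^s)$ (with $u=X-Y$) at the poles $Y=0$ of order $s$ and $Y=-u$ of order $r$, via straightforward binomial expansions.

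Substituting $X=a$, $Y=b$ and summing over $a\neq b$ in $A_{d^+}$, the difference $c=a-b$ ranges over the nonzero elements of $A$ of degree strictly less than $d$, the other variable (either $a$ or $b$) ranging freely over $A_{d^+}$. Writing $c=\alpha c'$ with $\alpha\in\mathbb{F}_q^{*}$ and $c'$ monic, and splitting by $\deg c'=e$,
\[
\sum_{\substack{c\in A\setminus\{0\}\\\deg c<d}}\frac{1}{c^n}=\Big(\sum_{\alpha\in\mathbb{F}_q^{*}}\alpha^{-n}\Big)\sum_{e=0}^{d-1}S_e(n),
\]
and the character-sum factor equals $-1$ when $q-1\mid n$ and $0$ otherwise. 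This single orthogonality step produces the constraint $q-1\mid j$ in Chen's formula, while the surviving factor $\sum_{e<d}S_e(n)$, multiplied by the outer $\sum_{b\in A_{d^+}}1/b^m=S_d(m)$, is precisely $S_d(m,n)$ by definition.

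Finally, matching indices with Chen's formula, $j$ becomes the exponent of $c$ and $i$ the exponent of $a$ or $b$, the binomials reshape into $\binom{j-1}{r-1}$ and $\binom{j-1}{s-1}$, and the constraint $i+j=r+s$ is automatic. Sign bookkeeping uses the observation that $q-1\mid j$ forces $(-1)^{j}=1$ (for $p$ odd, $q-1$ is even; for $p=2$, already $-1=1$ in $\mathbb{F}_q$), which converts the $-(-1)^{s-i}$ and $-(-1)^{s}$ arising from the partial fraction coefficients into $(-1)^{r-1}$ and $(-1)^{s-1}$ respectively. The truncated ranges $j\leq s$ and $i\leq r$ may be extended to all $i,j\geq 1$ because the additional binomial coefficients vanish. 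I expect the main obstacle to be precisely this index/sign bookkeeping; all analytic input is concentrated in the partial fraction identity and the one character-sum vanishing.
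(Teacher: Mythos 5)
The paper gives no proof of this Proposition at all --- it is imported verbatim from Chen \cite{Ch}, Theorem 3.1 and Remark 3.2 --- so there is nothing internal to compare against; your reconstruction is, in substance, Chen's own argument and it is correct. The partial-fraction identity is the right residue computation (the coefficients $(-1)^{s-j}\binom{r+s-j-1}{r-1}$ at the pole $Y=0$ and $(-1)^{s}\binom{r+s-i-1}{s-1}$ at $Y=-u$ both check out, e.g.\ for $(r,s)=(2,1)$), the bijection $(a,b)\mapsto (b,\,a-b)$ onto $A_{d^+}\times\{c\neq 0,\ \deg c<d\}$ is exactly what converts the inner sum into $S_d(m)\sum_{e<d}S_e(n)=S_d(m,n)$, and the orthogonality $\sum_{\alpha\in\mathbb{F}_q^{*}}\alpha^{-n}\in\{-1,0\}$ correctly yields both the constraint $q-1\mid j$ and the global minus sign. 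The sign bookkeeping you defer to is also fine: in the first family the coefficient becomes $-(-1)^{s-i}=-(-1)^{j-r}=(-1)^{r-1}$ using $(-1)^{j}=1$ (valid since $q-1$ is even for $p$ odd, and trivially in characteristic $2$), and in the second $-(-1)^{s}=(-1)^{s-1}$, while the extension of the truncated ranges costs nothing because the extra binomial coefficients vanish by the stated convention.
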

We can determine the value of the binomial coefficients modulo $p$ by using Lucas's theorem (\cite{Lu} Section 3).
\begin{prop}[Lucas's Theorem]
Let $p$ be a prime number and $m, n \in \mathbb{Z}_{\geq0}$. Then we have 
\[ 
\binom{m}{n} \equiv \binom{m_0}{n_0}\cdots \binom{m_k}{n_k} \ {\rm mod}\ p
\] 
where $m=m_0+m_1p+\cdots +m_kp^k$ and $n=n_0+n_1p+\cdots +n_kp^k \ (m_i, n_i \in \{0, 1, \ldots, p-1\}$ for $i=0, 1, \ldots, k)$ are $p$-adic expansions of $m$ and $n$. 
\end{prop}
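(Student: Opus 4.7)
The plan is to deduce Lucas's congruence by computing the coefficients of $(1+x)^m$ in the polynomial ring $\mathbb{F}_p[x]$ in two different ways. The ordinary binomial theorem delivers $\binom{m}{n} \bmod p$ as the coefficient of $x^n$, and the task is to obtain the product $\prod_i \binom{m_i}{n_i}$ from a second expansion that exploits the Frobenius map in characteristic $p$.

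First I would record the ``freshman's dream'' identity $(1+x)^p \equiv 1 + x^p \pmod p$, which is immediate from $p \mid \binom{p}{k}$ for $1 \leq k \leq p-1$, and iterate it to obtain $(1+x)^{p^i} \equiv 1 + x^{p^i} \pmod p$ for every $i \geq 0$. Combining this with the $p$-adic expansion $m = m_0 + m_1 p + \cdots + m_k p^k$ gives
\[
(1+x)^m \;=\; \prod_{i=0}^{k}\bigl((1+x)^{p^i}\bigr)^{m_i} \;\equiv\; \prod_{i=0}^{k}(1+x^{p^i})^{m_i} \pmod p.
\]
Expanding each factor on the right by the ordinary binomial theorem yields
\[
\prod_{i=0}^{k}(1+x^{p^i})^{m_i} \;=\; \prod_{i=0}^{k}\sum_{n_i=0}^{m_i}\binom{m_i}{n_i}x^{n_i p^i}.
\]

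To finish, I would compare the coefficients of $x^n$ in the two expressions. Because each exponent $n_i$ satisfies $0 \leq n_i \leq m_i \leq p-1$, the uniqueness of the $p$-adic expansion of $n$ forces the only contributing tuple in the right-hand product to be the digits $(n_0,\ldots,n_k)$ of $n$ itself; moreover, should some digit $n_i$ exceed the corresponding $m_i$, the binomial coefficient $\binom{m_i}{n_i}$ already vanishes by the convention stated just before Lucas's theorem, so the formula still holds on the nose. The argument is elementary and poses no real obstacle; the only step that benefits from explicit mention is the uniqueness of $p$-adic digits controlling which monomial contributes, and this could equivalently be packaged as an induction on $k$ or derived from Kummer's theorem on the $p$-adic valuation of binomial coefficients.
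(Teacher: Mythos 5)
Your argument is correct and complete: it is the standard generating-function proof of Lucas's theorem via the Frobenius identity $(1+x)^{p^i} \equiv 1+x^{p^i} \pmod p$ and comparison of the coefficient of $x^n$, with the uniqueness of base-$p$ digits correctly invoked to isolate the single contributing monomial and the degenerate case $n_i > m_i$ handled properly. Note, however, that the paper offers no proof of this proposition at all---it is stated as a known result with a citation to Lucas's original 1878 paper---so there is no in-text argument to compare against; your write-up simply supplies the standard proof that the paper takes for granted.
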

\section{Lara Rodr\'iguez' full conjecture and counter-examples}
\label{con}
Lara Rodr\'iguez conjectured several relations for Thakur's double zeta values in \cite{La}. We recall it in Section \ref{st}.  We detect some typos and errors in his formulae in Section \ref{rc}. 
\subsection{Statements}
\label{st}
The following is one of those conjectures which he called the full conjecture (\cite{La} Conjecture 2.8). It yields ''full'' descriptions of the 'harmonic product' formula for specific double zeta values (cf. \cite{La} Section 1). 
\begin{conj}[Lara Rodr\'iguez' full conjecture]
\label{lcon}
For $n, d \in \mathbb{Z}_{\geq1}$ and general $q$, we have
\begin{align}
\label{ii}\Delta_d(q^n+1, q^n)&={\rm Int}\biggl( \frac{2}{q} \biggr)S_d(2, 2q^n-1)\\
&\quad -\sum^{\frac{q^n-1}{q-1}}_{j=1}S_d\Bigl(3+(j-1)(q-1), 2q^n-2-(j-1)(q-1)\Bigr). \nonumber \\ 
\label{iii}\Delta_d(q^n-1, q^n+1)&=-\sum^{\frac{q^n+q-2}{q-1}}_{j=1}S_d\Bigl(2+(j-1)(q-1), 2q^n-2-(j-1)(q-1)\Bigr).\\
\label{iv}\Delta_d(q^{n-1}, q^n+1)&={\rm Int}\biggl(\frac{2}{q}\biggr)S_d\Bigl(2, q^n+q^{n-1}-1\Bigr) \\
						    &\ -\sum^{\frac{q^{n-1}-1}{q-1}}_{j=1}S_d\Bigl(3+(j-1)(q-1), q^n+q^{n-1}-2 + (j-1)(q-1)\Bigr).\nonumber \\ 
\intertext{For $0\leq i\leq n$, we have}
		\label{v}
\Delta_d(q^n+1, q^n+1-q^i)&={\rm Int}\biggl( \frac{2}{q} \biggr)S_d(2, 2q^n-q^i)\\
                                 &\quad -\sum^{\frac{q^n-q^i}{q-1}}_{j=1}S_d\Bigl(3+(j-1)(q-1), 2q^n-q^i-1-(j-1)(q-1)\Bigr)\nonumber \\
					&\quad +\sum^{\frac{q^n-q^i}{q-1}}_{j=\frac{q^n-q^i}{q-1}+1}S_d\Bigl(3+(j-1)(q-1), 2q^n-q^i-1-(j-1)(q-1)\Bigr).\nonumber
\end{align}
\end{conj}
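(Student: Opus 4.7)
The plan is to run the same machinery on all four identities uniformly: apply Chen's formula to expand $\Delta_d(r,s)$ on each left-hand side as $\sum_{i+j=r+s,\,(q-1)\mid j}\bigl\{(-1)^{s-1}\binom{j-1}{s-1}+(-1)^{r-1}\binom{j-1}{r-1}\bigr\}S_d(i,j)$, and then use Lucas's theorem to decide which coefficients vanish modulo $p$ and which reduce to $\pm 1$. Since each right-hand side of (\ref{ii})--(\ref{v}) is a sum of $S_d(i,j)$'s whose indices $(i,j)$ run over explicit arithmetic progressions modulo $q-1$, matching the two sides reduces to a purely combinatorial comparison of the $p$-adic digits of $j-1$ against those of $r-1$ and $s-1$.

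For (\ref{ii}) one takes $r=q^n+1$, $s=q^n$, writes $j=(q-1)k$, and exploits that $q^n=p^{ln}$ is a single digit in base $p$ while $q^n-1$ is a string of $ln$ copies of $p-1$; this should force the coefficient of $S_d(i,j)$ to be nonzero precisely for $j'=1,\ldots,\tfrac{q^n-1}{q-1}$ with $i=3+(j'-1)(q-1)$, plus the single extra term $S_d(2,2q^n-1)$, which only survives when $q=2$ and is tracked by $\mathrm{Int}(2/q)$. The computation for (\ref{iii}) is parallel with $r=q^n-1$, $s=q^n+1$: the $p$-adic analysis lines up differently and no Int-correction is produced, so the full index range $j'=1,\ldots,\tfrac{q^n+q-2}{q-1}$ should emerge directly. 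For (\ref{iv}) with $r=q^{n-1}$, $s=q^n+1$, and for (\ref{v}) with $r=q^n+1$, $s=q^n+1-q^i$, the pairs $(r-1,s-1)$ involve two, respectively three, distinct powers of $q$, so the digit analysis becomes a genuine case split on the carry behaviour of $j-1$ relative to these powers.

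I expect the main obstacle to be (\ref{v}). Its right-hand side as written consists of a subtraction over $1\le j\le\tfrac{q^n-q^i}{q-1}$ together with an addition whose lower index $\tfrac{q^n-q^i}{q-1}+1$ already exceeds its upper index $\tfrac{q^n-q^i}{q-1}$, so the second summation is formally empty; the identity is therefore a tight assertion about the vanishing of Lucas's binomial pattern on the complement of the first range. Pushing the Chen--Lucas computation through will require a careful case split on the relative sizes of $q^i$ and $q^n$, on the position of $q^i$ inside the $p$-adic expansion of $q^n-q^i$, and on whether $q=2$. It is precisely at this step that one will see whether the identity holds as worded or whether the combinatorial calculation forces a modification of the range of the second sum (and, in parallel, of the similarly structured formula (\ref{iv})), which would then have to be proved in its corrected form.
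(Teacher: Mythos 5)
Your method---expanding each $\Delta_d$ by Chen's formula and sieving the coefficients with Lucas's theorem---is exactly the paper's method for all four identities, and your suspicion that the computation might force corrections to \eqref{iv} and \eqref{v} rather than confirm them is precisely what the paper establishes. The gap is that your proposal leaves this decisive issue undetermined, and it cannot be resolved in favor of the statement: \eqref{iv} and \eqref{v} are false as worded, so no execution of your plan yields a proof of the conjecture. The paper proves \eqref{ii} and \eqref{iii} along the lines you sketch (Theorems \ref{1} and \ref{2}), but for the second half it exhibits counter-examples and then proves corrected identities \eqref{thm11} and \eqref{thm12} (Theorems \ref{3} and \ref{4}). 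Moreover, your framing of the needed repair as ``a modification of the range of the second sum'' is right for \eqref{v} but wrong for \eqref{iv}.

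Concretely, \eqref{iv} has two independent defects. First, a sign typo: the second argument must read $q^n+q^{n-1}-2-(j-1)(q-1)$; as printed, the $j$-th summand has weight $q^n+q^{n-1}+1+2(j-1)(q-1)$, whereas Chen's formula expands $\Delta_d(q^{n-1},q^n+1)$ only in terms $S_d(i,j)$ with $i+j=q^n+q^{n-1}+1$ (the paper converts this weight mismatch into an actual counter-example at $q=2$, $d=2$, $n=3$). Second, and this is what a pure range-adjustment view misses, even after the sign is fixed the Lucas analysis produces one additional term with coefficient $+1$ that the conjecture omits: in the range $q^{n-1}+q-3\leq j-1\leq q^n-2$ one has $\binom{j-1}{q^n}=0$, while $\binom{j-1}{q^{n-1}-1}\nequiv 0\ {\rm mod}\ p$ forces the bottom $l(n-1)$ base-$p$ digits of $j-1$ to equal $p-1$, and the divisibility $(q-1)\mid j$ then pins down $j=q^n-q^{n-1}$; this contributes $+S_d(2q^{n-1}+1,\,q^n-q^{n-1})$, which is genuinely nonzero (for $q=3$, $n=3$, $d=1$ it is $S_1(19)$, whose numerator has constant term $2^{19}\equiv -1\ {\rm mod}\ 3$). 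For \eqref{v} your guess is the correct one: the upper limit of the third sum must be $\frac{q^n-1}{q-1}$ rather than $\frac{q^n-q^i}{q-1}$, which is the content of \eqref{thm12}; the corrected identity then recovers \eqref{ii} at $i=0$ and has an empty second sum at $i=n$, as consistency requires.
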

\subsection{Remarks and Counter-examples}
\label{rc}
\begin{rem}
\label{re(1)}
Actually, in \cite{La} Conjecture 2.8 (2.8.1), Lara Rodr\'iguez conjectured one more relation
\[
\Delta_{d}(q^n, q^n-1)=-S_d(q^n, q^n-1).
\]
However he proved it in his later paper \cite{La2012} Theorem 6.3. 
\end{rem}

\begin{rem}
\label{re(2)}
The equation \eqref{ii} was stated as \cite{La} (2.8.2). In the case when $q=2$, this coincide with second formula in \cite{Th} Section 4.1.3.
The equation \eqref{ii} will be affirmatively proven in Theorem \ref{1}. 
\end{rem}

\begin{rem}
\label{re(3)}
The equation \eqref{iii} was stated as \cite{La} (2.8.3). In the case when $q=2$, this coincide with third formula in \cite{Th} Section 4.1.3. Again, the equation \eqref{iii} will be affirmatively proven in Theorem \ref{2}. 
\end{rem}

\begin{rem}
\label{re(4)}
The equation \eqref{iv} was stated as (2.8.4) in \cite{La} (in the case when $q=2$, this coincide with fourth formula in \cite{Th} Section 4.1.3). It looks that \eqref{iv} contains a typo, and furthermore it requires an additional term to correct it. 

Indeed it is quite curious to expect such an equality among the values with different weights (the sum of the first and the second components of double indices): In the right hand side of the equation \eqref{iv}, the first term is with weight $q^n+q^{n-1}+1$ while the summand of the second term is with weight $q^n+q^{n-1}+1+2(j-1)(q-1)$. 
In the case when $q=2$, $d=2$ and $n=3$, the equation \eqref{iv} claims 
\begin{align}
\label{lae}  
\Delta_2(4, 9)=S_2(2,11)-S_2(3,10)-S_2(4,11)-S_2(5,12),
\end{align}
while Chen's formula says
\begin{align}
\label{che}  
\Delta_2(4, 9)&=\sum_{\substack{i+j=13\\ i, j\geq1}}\biggl\{ \binom{j-1}{8}-\binom{j-1}{3} \biggr\}S_2(i,j) \\
                    &\equiv S_2(2,11)+S_2(3,10)+S_2(4,9)+S_2(5,8)+S_2(9,4)\ {\rm mod}\ 2.\nonumber
\end{align}
Therefore we must have
\begin{align}
\label{lach}
&S_2(2,11)+S_2(3,10)+S_2(4,11)+S_2(5,12)  \\
\quad &-S_2(2,11)-S_2(3,10)-S_2(4,9)-S_2(5,8)-S_2(9,4)\equiv 0\ {\rm mod}\ 2. \nonumber 
\end{align}
However,
\begin{align*}
  &S_2(4,11)+S_2(5,12)-S_2(4,9)-S_2(5,8)-S_2(9,4)\\
&\quad \equiv S_2(9,4)+S_2(5,12)+S_2(5,8)+S_2(4,11)+S_2(4,9)\ {\rm mod}\ 2 \\
  &\quad =S_2(9,4)+S_2(5)\Bigl(1+S_1(12)\Bigr)+S_2(5)\Bigl(1+S_1(8)\Bigr)\\
  &\qquad+S_2(4)\Bigl(1+S_1(11)\Bigr)+S_2(4)\Bigl(1+S_1(9)\Bigr)\\
  &\quad =S_2(9,4)+S_2(5)\Bigl(S_1(12)+S_1(8) \Bigr)+S_2(4)\Bigl(S_1(11)+S_1(9)\Bigr)
\end{align*}
Each term is calculated to be
\begin{align*}
  &S_2(9,4)\\
  &\quad \equiv \frac{\Bigl\{ \sum_{i=0}^{33}t^i+t^{32}+t^{31}+t^{30}+t^{26}+t^{25}+t^{22}+t^{17}+t^{16}+t^{15}+t^8+t^6+t^5+t^2 \Bigr\}}{t^{22}(t+1)^{19}(t^2+t+1)^9(t^2+1)^5}\\
  &\qquad\cdot(t+1)^6(t^2+1)^5\ {\rm mod}\ 2, \\
  &S_2(5)\Bigl(S_1(12)+S_1(8) \Bigr)\\
  &\quad\equiv  \frac{ \sum_{i=0}^{29}t^i+t^{26}+t^{25}+t^{23}+t^{21}+t^{20}+t^{19}+t^{16}+t^{13}+t^{12}+t^9+t^8+t^5}{t^{22}(t+1)^{19}(t^2+t+1)^9(t^2+1)^5}\\
   &\qquad \cdot(t^2+t+1)^4(t+1)^2\ {\rm mod}\ 2,\\
  &S_2(4)\Bigl(S_1(11)+S_1(9)\Bigr)\equiv \frac{\bigl\{ t^{12}+t^{5}+t^{4}+t^{3}+t^{2}+t+1 \bigr\}t^3(t^2+t+1)^8(t^2+1)^5}{t^{22}(t+1)^{19}(t^2+t+1)^{9}(t^2+1)^5}\ {\rm mod}\ 2.
\end{align*}
The degrees of numerators of $S_2(9,4)$, $S_2(5)\Bigl(S_1(12)+S_1(8) \Bigr)$ and $S_2(4)\Bigl(S_1(11)+S_1(9)\Bigr)$ are $49$, $39$ and $41$ respectively.  
Thus we find the degree of each numerator is different while they have the same denominators. Then it follows that $S_2(4,11)+S_2(5,12)-S_2(4,9)-S_2(5,8)-S_2(9,4)\nequiv 0\ {\rm mod}\ 2$ and this contradicts to \eqref{lach}. This gives the counter-example of \eqref{iv}. 

Therefore, we may correct \eqref{iv} as follows.
\begin{align}
\label{riv}
\Delta_d(q^{n-1}, q^n+1)&={\rm Int}\biggl(\frac{2}{q}\biggr)S_d\Bigl(2, q^n+q^{n-1}-1\Bigr) \\
						    &\ -\sum^{\frac{q^{n-1}-1}{q-1}}_{j=1}S_d\Bigl(3+(j-1)(q-1), q^n+q^{n-1}-2 - (j-1)(q-1)\Bigr).\nonumber
\end{align}
However, the above equation is not correct, due to a lack of an additional terms which is explained below:  
When $q=3, d=1$ and $n=3$, \eqref{riv} claims
\begin{align}
\label{vi} 
 \Delta_1(9, 28)=-S_1(3)-S_1(5)-S_1(7)-S_1(9).
\end{align} 
But according to Chen's formula, we have
\[
  \Delta_1(9, 28)=\sum_{\substack{i+j=37\\2|j\\i, j\in\mathbb{Z}_{\geq1}}}\biggl\{ -\binom{j-1}{27}+\binom{j-1}{8} \biggr\}S_1(i).
\] 
By Lucas's theorem, we find that the coefficient of $S_1(i)$'s vanish modulo $3$ except $-S_1(3), -S_1(5), -S_1(7), -S_1(9)$ and $S_1(19)$. That is,
\begin{align}
\label{vii}  
\Delta_1(9, 28)=-S_1(3)-S_1(5)-S_1(7)-S_1(9)+S_1(19).
\end{align}
By the definition of power sum, 
\[
  S_1(19)=\frac{1}{t^{19}}+\frac{1}{(t+1)^{19}}+\frac{1}{(t+2)^{19}}=\frac{t^{19}(t+2)^{19}+(t+1)^{19}(t+2)^{19}+t^{19}(t+1)^{19}  }{ t^{19}(t+1)^{19}(t+2)^{19}}.
\] 
The numerator of the right hand side has $2^{19}\equiv -1\ {\rm mod}\ 3$ as a constant term. Therefore $S_1(19)$ does not vanish modulo $3$. 
Thus \eqref{vi} contradicts to \eqref{vii}. So this suggests that we need additional terms to correct it.
In Theorem \ref{3}, we correct the equation \eqref{iv} as the equation \eqref{thm11} and prove it.
\end{rem}

\begin{rem}
\label{re(5)}
The equation \eqref{v} was stated as (2.8.5) in \cite{La}. Again, it looks that the equation \eqref{v} contains a typo because the summation of the third term in right hand side runs over the empty sum. We correct the equation \eqref{v} as the equation \eqref{thm12} and prove it in Theorem \ref{4}.
\end{rem}

%
%
%

\section{Main results}
\label{Ma}
In this section, we prove the first half of Lara Rodr\'iguez' full conjecture in Theorem \ref{1} and \ref{2}, we correct and prove the second half of the conjecture in Theorem \ref{3} and \ref{4}. Precisely in Theorem \ref{1} and \ref{2}, we show that the equations \eqref{ii} and \eqref{iii} hold. In Theorem \ref{3} and \ref{4}, we correct the equations \eqref{iv} and \eqref{v} as the equations \eqref{thm11} and \eqref{thm12} respectively and give their proofs.      
\begin{thm}
\label{1}
For $n\ and\ d \in \mathbb{Z}_{\geq1}$, the equation \eqref{ii} holds. 
\end{thm}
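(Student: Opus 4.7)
The plan is to apply Chen's formula directly to $\Delta_d(q^n+1,q^n)$ and then use Lucas's theorem to identify the surviving terms on the right-hand side.

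Setting $r = q^n + 1$ and $s = q^n$ in Chen's formula yields
\[
\Delta_d(q^n+1,q^n) = \sum_{\substack{i+J = 2q^n+1 \\ q-1 \mid J \\ i,J \geq 1}} \Bigl\{(-1)^{q^n-1}\binom{J-1}{q^n-1} + (-1)^{q^n}\binom{J-1}{q^n}\Bigr\}\, S_d(i,J).
\]
Writing $q = p^l$, the base-$p$ expansion of $q^n - 1$ consists of $ln$ digits all equal to $p-1$. Lucas's theorem thus forces $\binom{J-1}{q^n-1} \not\equiv 0 \pmod p$ only when the lowest $ln$ base-$p$ digits of $J-1$ are each $p-1$, which combined with $J \leq 2q^n$ pins $J$ to $\{q^n, 2q^n\}$. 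On the other hand, $q^n = p^{ln}$ has a single nonzero digit (a $1$ in position $ln$), so $\binom{J-1}{q^n} \not\equiv 0 \pmod p$ is equivalent to $J \geq q^n + 1$; when this holds, Lucas further shows the binomial equals $1$ modulo $p$.

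Next I would case-split by the parity of $p$. In odd characteristic, both $q$ and $q^n$ are odd, so the bracketed coefficient simplifies to $\binom{J-1}{q^n-1} - \binom{J-1}{q^n}$. The index $J = q^n$ falls outside Chen's sum because $q^n \equiv 1 \pmod{q-1}$, while at $J = 2q^n$ a short Lucas computation shows both binomials equal $1$ modulo $p$, producing exact cancellation. The remaining surviving $J$ lie in $[q^n+1, 2q^n - 1]$ with $q-1 \mid J$, each contributing coefficient $-1$; writing $J = (q^n - 1) + k(q-1)$ for $k = 1,\dots,(q^n-1)/(q-1)$ and reindexing by $j = (q^n-1)/(q-1) - k + 1$ reproduces exactly the summation on the right of \eqref{ii}, while $\mathrm{Int}(2/q) = 0$ since $q \geq 3$. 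In characteristic $2$ the signs collapse and Pascal's rule gives the coefficient as $\binom{J}{q^n} \pmod 2$, nonzero precisely when the $ln$-th bit of $J$ is set, so the surviving indices are $J \in [q^n, 2q^n - 1]$. When $q = 2$ the endpoint $J = q^n$ also survives and contributes the $\mathrm{Int}(1)\cdot S_d(2, 2q^n-1)$ term; when $q = 2^l$ with $l \geq 2$, $J = q^n$ is ruled out because $q - 1 \nmid q^n$, and the remaining indices match as before.

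The principal technical obstacle is carrying out the Lucas-theorem bookkeeping precisely — especially verifying the cancellation at $J = 2q^n$ in odd characteristic (which depends on both binomials simultaneously reducing to $1 \pmod p$) and confirming that the reindexing $k \leftrightarrow j$ makes the surviving set coincide exactly with the summation range stated in \eqref{ii}.
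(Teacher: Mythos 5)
Your overall strategy --- Chen's formula followed by a Lucas-theorem analysis of the two binomial coefficients, split by characteristic --- is exactly the paper's, and your odd-characteristic case is sound. In fact it is slightly more careful than the paper's in two places: you explicitly check that the boundary index $J=2q^n$, which does enter Chen's sum when $q=3$, has vanishing coefficient (a point the paper passes over), and the Pascal-rule reduction of the characteristic-$2$ coefficient to $\binom{J}{q^n}$ is a clean shortcut. The reindexing $J=(q^n-1)+k(q-1)$, $1\leq k\leq (q^n-1)/(q-1)$, also matches the summation in \eqref{ii} correctly.

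There is, however, one concrete error in your $q=2$ bookkeeping. You assert that the endpoint $J=q^n$ ``contributes the ${\rm Int}(1)\cdot S_d(2,2q^n-1)$ term.'' It does not: since $i+J=2q^n+1$, the index $J=q^n$ gives $i=q^n+1$, i.e.\ the term $S_d(q^n+1,q^n)$, which is the $j=(q^n-1)/(q-1)$ summand of the sum in \eqref{ii}. The term $S_d(2,2q^n-1)$ instead comes from the opposite endpoint $J=2q^n-1$ (equivalently $i=2$), which survives precisely when $q=2$ because for $q>2$ one has $2q^n-1\equiv 1\pmod{q-1}$, so that index is excluded by the divisibility condition. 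Your final identity is unaffected --- the union of surviving indices $[q^n,2q^n-1]$ is correct and all coefficients agree --- but the decomposition of that union into ``the ${\rm Int}$ term'' plus ``the summation of \eqref{ii}'' must be $\{2q^n-1\}\sqcup[q^n,2q^n-2]$, not $\{q^n\}\sqcup[q^n+1,2q^n-1]$ as you have it. With that correction the argument is complete and coincides with the paper's proof.
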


\begin{proof}

{\it Case 1} (the case when $q=2$). 
By Chen's formula for $q=2$,
\[
  \Delta_d(2^n+1, 2^n)=\sum_{\substack{i+j=2^{n+1}+1\\  i,j\geq 1}}\biggl\{ \binom{j-1}{2^n-1}+\binom{j-1}{2^n} \biggr\}S_d(i, j).
\]

When $j=2^{n+1}$, we have $\binom{2^{n+1}-1}{2^n-1}=\binom{2^{n+1}-1}{2^n}$. So we obtain
\begin{align*}
\binom{j-1}{2^n-1}+\binom{j-1}{2^n}=0
\end{align*}
for $j=2^{n+1}$.

When $1\leq j<2^n$, it is clear that 
\begin{align*}
\binom{j-1}{2^n-1}+\binom{j-1}{2^n}=0+0=0.
\end{align*}
 
When $2^n\leq j\leq 2^{n+1}-1$, let 
\[
  j-1=j_0+j_1\cdot 2+\cdots +j_{n}\cdot 2^{n}
\] 
be the $2$-adic expansion of $j-1$. 
The $2$-adic expansion of $2^{n}-1$ is given as follows
\begin{align*}
  2^n-1&= 1+2+2^2+\cdots +2^{n-1}.  
\end{align*}
By using Lucas's theorem,
\begin{align*}
  \binom{j-1}{2^n-1}&\equiv \prod_{k=0}^{n-1}\binom{j_k}{1}\binom{j_n}{0} \quad {\rm mod}\ 2,\\
  \binom{j-1}{2^n}&\equiv \prod_{k=0}^{n-1}\binom{j_k}{0}\binom{j_n}{1} \quad {\rm mod}\ 2.  
\end{align*}
Thus we obtain
\begin{align*}
\binom{j-1}{2^n-1}&\equiv 1 \ {\rm mod}\ 2\Leftrightarrow j-1=2^n-1\ {\rm or}\ 2^{n+1}-1,\\
\binom{j-1}{2^n}&\equiv 1 \ {\rm mod}\ 2\Leftrightarrow j_n=1 
\end{align*}
We always have $j_n=1$ for all $j$ with $2^n\leq j\leq 2^{n+1}-1$.
%
Therefore,
\[
\binom{j-1}{2^n-1}+\binom{j-1}{2^n} \equiv 
\begin{cases} 
 &1 \ {\rm mod}\ 2 \ \text{if $2^n\leq j\leq 2^{n+1}-1$},\\
 &0 \ {\rm mod}\ 2 \ \text{if $j\leq 2^n-1$\ or\ $j=2^{n+1}$}.	
\end{cases}
\]
Thus Chen's formula for $q=2$ becomes
\[
\Delta_d(2^n+1, 2^n)=\sum_{\substack{i+j=2^{n+1}+1\\ 2^n \leq j\leq 2^{n+1}-1}}S_d(i, j).
\]
Replacing $j$ with $2^{n+1}-j$, we have $2^n-1\leq 2^{n+1}-j\leq 2^{n+1}-1$ and thus $2\leq j\leq 2^n+1$.

Therefore
\begin{align*}		  
\Delta_d(2^n+1, 2^n)&=S_d(2, 2^{n+1}-1)+\sum^{2^n+1}_{j=3}S_d(j, 2^{n+1}+1-j)\\
			        &=S_d(2, 2^{n+1}-1)-\sum^{2^n-1}_{j=1}S_d(j+2, 2^{n+1}-1-j).
\end{align*}
So we obtain \eqref{ii}.

{\it Case 2} (the case when $q=p^l\neq2$). 
By Chen's formula, we have
\[
\Delta_d(q^n+1, q^n)=\sum_{\substack{ i+j=2q^n+1\\q-1|j}}\biggl\{ (-1)^{q^n-1}\binom{j-1}{q^n-1}+(-1)^{q^n}\binom{j-1}{q^n} \biggr\}S_d(i,j).
\] 
We obtain 
\[
(-1)^{q^n-1}\binom{j-1}{q^n-1}+(-1)^{q^n}\binom{j-1}{q^n}=\binom{j-1}{q^n-1}-\binom{j-1}{q^n}
\]
(we note that the above equation holds for $p=2$ because the characteristic is 2 in this case).

When $0<j \leq q^n-1$ with $q-1|j$, it is easily seen that 
\begin{align}\label{himeji}
\binom{j-1}{q^n-1}-\binom{j-1}{q^n}=0-0=0.
\end{align}

When $q^n+q-2\leq j\leq 2q^n-2$ with $q-1|j$, 
We put the $p$-adic expansions of $j-1$ and $q^n-1$ as follows
\begin{align*}
j-1&=j_0+j_1p+\cdots +j_{ln}p^{ln},\\
q^n-1&=p-1+(p-1)p+\cdots +(p-1)p^{ln-1}. 
\end{align*} 
Applying Lucas's theorem, we have
\begin{align*}
\binom{j-1}{q^n-1}&\equiv \prod^{ln-1}_{k=0}\binom{j_k}{p-1}\binom{j_{ln}}{0} \ {\rm mod}\ p,   \\
\binom{j-1}{q^n}&\equiv \prod^{ln-1}_{k=0}\binom{j_k}{0}\binom{j_{ln}}{1} \ {\rm mod}\ p.
\end{align*}
Thus it follows that
\begin{align*}
\binom{j-1}{q^n-1}&\nequiv 0 \ {\rm mod}\ p \Leftrightarrow j_k=p-1 \ (k\in\{ 0,1,\cdots ,ln-1\}),\\
\binom{j-1}{q^n}&\nequiv 0 \ {\rm mod}\ p \Leftrightarrow j_{ln}\neq 0.
\end{align*}
By the condition $q^n+q-1\leq j\leq 2q^n-2$, we have
\[
q^n+q-1=p^{ln}+p^l-1\leq j\leq p-2+(p-1)p+\cdots +(p-1)p^{ln-1}+p^{ln}=2q^n-2.
\]
So we always have $j_{ln}=1$. Then $\binom{j_{ln}}{1}=1$ for $j$ with $q^n+q-1\leq j\leq 2q^n-2$ and $q-1|j$.  It follows that 
\[
\binom{j-1}{q^n}\equiv 1 \ {\rm mod}\ p.
\]
If $j_k=p-1$ for all $k\in\{ 0,1,\ldots, ln-1\}$ we have $j-1=q^n-1+q^n=2q^n-1$ because we always have $j_{ln}=1$. This contradicts to the condition $q^n+q-2 \leq j \leq 2q^n-2$. Thus we have
\[
\binom{j-1}{q^n-1}\equiv 0 \ {\rm mod}\ p.
\]
for $j$ with $q^n+q-2 \leq j \leq 2q^n-2$ and $q-1|j$. 
Therefore
\begin{align}\label{akou}
\binom{j-1}{q^n-1}-\binom{j-1}{q^n}\equiv 1\ \ {\rm mod}\ p
\end{align}
for $j$ with $q^n+q-2\leq j\leq 2q^n-2$ and $q-1|j$. 

Therefore, by \eqref{himeji} and \eqref{akou}, we obtain
\[
  \binom{j-1}{q^n-1}-\binom{j-1}{q^n}\equiv
\begin{cases}
&0 \ \ {\rm mod}\ p\ \text{if $0<j\leq q^n-1$ with $q-1|j$,}\\
&1 \ \ {\rm mod}\ p\ \text{if $q^n+q-2\leq j\leq 2q^n-2$ with $q-1|j$.}
\end{cases}
\]
Then Chen's formula becomes
\begin{align*}
\Delta_d(q^n+1, q^n)=-\sum_{\substack{ i+j=2q^n+1\\q^n+q-2\leq j \leq 2q^n-2\\q-1|j} }S_d(i,j).
\end{align*}
Putting $i$ as $3+(j-1)(q-1)$ and $j$ as $2q^n-2-(j-1)(q-1)$, we have $q^n+q-2\leq 2q^n-2-(q-1)(j-1)\leq 2q^n-2$ and thus $1\leq j\leq \frac{q^n-1}{q-1}$. Therefore
\[
\Delta_d(q^n+1, q^n)=-\sum^{\frac{q^n-1}{q-1}}_{j=1}S_d(3+(j-1)(q-1), 2q^n-2-(j-1)(q-1)).
\]

Combining {\it Case 1} and {\it Case 2}, we obtain the equation \eqref{ii}.
\end{proof}

\begin{thm}
\label{2}
For $n \ and\ d \in \mathbb{Z}_{\geq1}$, the equation \eqref{iii} holds.
\end{thm}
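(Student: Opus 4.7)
The plan is to follow the same strategy as the proof of Theorem \ref{1}: apply Chen's formula, reduce the coefficients modulo $p$ via Lucas's theorem, and reindex the surviving terms to recognise the right-hand side of \eqref{iii}. Chen's formula with $r = q^n - 1$, $s = q^n + 1$ gives
\[
\Delta_d(q^n - 1,\, q^n + 1) = (-1)^{q^n}\sum_{\substack{i + j = 2q^n \\ q - 1\,\mid\,j \\ i,\,j \geq 1}}\left[\binom{j - 1}{q^n} + \binom{j - 1}{q^n - 2}\right] S_d(i, j),
\]
after factoring out $(-1)^{q^n} = (-1)^{s-1} = (-1)^{q^n - 2} = (-1)^{r-1}$ (trivially valid in characteristic $2$).

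For the $p$-adic expansion $j - 1 = j_0 + j_1 p + \cdots + j_{ln} p^{ln}$, Lucas's theorem yields $\binom{j - 1}{q^n} \equiv j_{ln} \pmod{p}$ since $q^n = p^{ln}$ has its only nonzero digit at position $ln$. The $p$-adic digits of $q^n - 2$ are $(p - 2,\, p - 1,\, \ldots,\, p - 1,\, 0)$ at positions $0, 1, \ldots, ln$ (with $p - 2 = 0$ when $p = 2$), so $\binom{j - 1}{q^n - 2}$ is nonzero modulo $p$ only when $j_0 \in \{p - 2,\, p - 1\}$ and $j_k = p - 1$ for each $k = 1,\, \ldots,\, ln - 1$. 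Because $j - 1 < 2q^n$ forces $j_{ln} \in \{0, 1\}$, enumerating the four resulting sub-cases for $(j_0, j_{ln})$ pins down exactly three admissible candidates $j \in \{q^n - 1,\, q^n,\, 2q^n - 1\}$ at which the second binomial can be nonzero.

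I would then split into $q = 2$ and $q \neq 2$, paralleling the proof of Theorem \ref{1}. For $q \neq 2$, both $q^n$ and $2q^n - 1$ are $\equiv 1 \pmod{q - 1}$, so the divisibility $q - 1 \mid j$ eliminates them; only $j = q^n - 1$ survives from the three candidates, contributing coefficient $0 + 1 = 1$. The remaining admissible multiples of $q - 1$ in $[q^n + 1,\, 2q^n - 2]$ all satisfy $j_{ln} = 1$ and $\binom{j - 1}{q^n - 2} \equiv 0$, giving coefficient $1$. Hence the coefficient is $\equiv 1 \pmod{p}$ precisely for $j \in \{q^n - 1,\, q^n + q - 2,\, \ldots,\, 2q^n - 2\}$, and substituting $j = 2q^n - 2 - (m - 1)(q - 1)$ together with $(-1)^{q^n} = -1$ in odd characteristic (and $-1 = 1$ in characteristic $2$) delivers \eqref{iii}. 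For $q = 2$, divisibility is vacuous, so all three candidates in $\{2^n - 1,\, 2^n,\, 2^{n+1} - 1\}$ are admissible; however at $j = 2^{n+1} - 1$ both binomials equal $1$ and $1 + 1 \equiv 0 \pmod{2}$ kills that term, leaving the coefficient $\equiv 1 \pmod{2}$ exactly on $[2^n - 1,\, 2^{n+1} - 2]$; the substitution $i = 2^{n+1} - j$ (with $-1 = 1$) then delivers \eqref{iii}.

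The main obstacle is the careful bookkeeping of the three boundary candidates $\{q^n - 1,\, q^n,\, 2q^n - 1\}$: for $q \neq 2$ the divisibility cleanly disposes of two of them, while for $q = 2$ the single doubly-counted term cancels by the characteristic. The remainder is a routine Lucas's-theorem digit computation essentially identical to the one in the proof of Theorem \ref{1}.
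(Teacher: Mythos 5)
Your proposal is correct and follows essentially the same route as the paper: apply Chen's formula with $(r,s)=(q^n-1,q^n+1)$, use Lucas's theorem to determine which coefficients $\binom{j-1}{q^n}+\binom{j-1}{q^n-2}$ survive modulo $p$, and reindex the surviving $j$ (the multiples of $q-1$ in $[q^n-1,\,2q^n-2]$) to obtain \eqref{iii}. Your global enumeration of the three candidates $j\in\{q^n-1,\,q^n,\,2q^n-1\}$ where the second binomial can be nonzero is in fact slightly more careful than the paper's range-by-range case split, since it explicitly disposes of the boundary term $j=2^{n+1}-1$ in the $q=2$ case.
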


\begin{proof}
By Chen's formula,
\[
  \Delta_d(q^n-1, q^n+1)=-\sum_{\substack{i+j=2q^n\\q-1|j} }(-1)^{q^n-1}\biggl\{ \binom{j-1}{q^n}+\binom{j-1}{q^n-2} \biggr\}S_d(i,j).
\]

We have 
\[
(-1)^{q^{n}-1}\biggl\{ \binom{j-1}{q^n}+\binom{j-1}{q^n-2} \biggr\}=\binom{j-1}{q^n}+\binom{j-1}{q^n-2}
\]
(we note that the above equation holds for $q=p^l$ with $p=2$ because the characteristic is $2$). 

When $j< q^n-1$ with $q-1|j$, it is clear that
\begin{align}\label{toyoka}
\binom{j-1}{q^n}+\binom{j-1}{q^n-2}=0+0=0.
\end{align}

When $q^n-1\leq j \leq q^n$ with $q-1|j$ , we have $j=q^n-1$ because $j$ satisfies $q-1|j$. Thus in this case we have 
\begin{align}\label{awaji}
\binom{j-1}{q^n}+\binom{j-1}{q^n-2}=\binom{q^n-2}{q^n}+\binom{q^n-2}{q^n-2}=0+1=1.
\end{align}

When $q^n<j\leq 2q^n-2$ with $q-1|j$, 
we set the $p$-adic expansion of $j-1$ as follows  
\[
  j-1=j_0+j_1p+\cdots j_{ln}p^{ln} \quad (j_k \in \{0, 1, \ldots, p-1\}).
\]
 By Lucas's theorem, we have 
\[
  \binom{j-1}{q^n}\equiv \prod_{k=0}^{ln-1}\binom{j_k}{0}\binom{j_{ln}}{1}\ {\rm mod}\ p.
\]  
So we have
\[
  \binom{j-1}{q^n}\equiv 1\ {\rm mod}\ p \Leftrightarrow j_{ln}=1.
\]
We always have $j_{ln}=1$ because $2q^n-2=(p-2)+(p-1)p+(p-1)p^2+\cdots +(p-1)p^{ln-1}+p^{ln}$ and $q^n=p^{ln}< j\leq2q^n-2$. Therefore
\[
  \binom{j-1}{q^n}\equiv 1\ {\rm mod}\ p. 
\]
Next we will prove $\binom{j-1}{q^n-2}\equiv 0\ {\rm mod}\ p$. Again using Lucas's theorem, we obtain 
\[
  \binom{j-1}{q^n-2}\equiv \binom{j_0}{p-2}\prod^{ln-1}_{k=1}\binom{j_k}{p-1}\binom{1}{0} 
\] 
by $q^n-2=p-2+(p-1)p+\cdots+(p-1)p^{ln-1}+0\cdot p^{ln}$. Thus 
\[
  \binom{j-1}{q^n-2}\nequiv 0\ {\rm mod}\ p \Leftrightarrow p-2\leq j_0\leq p-1\ {\rm and}\ j_k=p-1\ \text{for all $k\in \{1, 2, \ldots, ln-1\}$}. 
\]
If $j_0=p-2$ and $j_k=p-1$ for all $k\in \{1, 2, \ldots ln-1\}$, we have $j-1=p-2+(p-1)p+\cdots +(p-1)p^{ln-1}+p^{ln}=2q^n-2$. However, $j=2q^n-1$ is not divisible by $q-1$. If $j_0=p-1$ and $j_k=p-1$ for all $k\in \{1, 2, \ldots ln-1\}$, we have $j-1=p-1+(p-1)p+\cdots +(p-1)p^{ln-1}+p^{ln}=2q^n-1$. But $j=2q^n$ is not divisible by $q-1$. Thus we always have
\[
\binom{j-1}{q^n-2}\equiv 0\ {\rm mod}\ p.
\] 
Therefore we have
\begin{align}\label{oita} 
 \binom{j-1}{q^n}+\binom{j-1}{q^n-2}  \equiv 1 \ {\rm mod}\ p
\end{align}
for $j$ with $q^n-1\leq j\leq 2q^n-2$ and $q-1|j$. 

By \eqref{toyoka}, \eqref{awaji} and \eqref{oita}, we obtain
\begin{align}
\binom{j-1}{q^n}+\binom{j-1}{q^n-2} \equiv
\begin{cases} 
 & 0\ \ {\rm mod}\ p\quad \text{if $j<q^n-1$ with $q-1|j$} \\
 & 1\ \ {\rm mod}\ p\quad \text{if $q^n-1\leq j\leq 2q^n-2$ with $q-1|j$}.\\
\end{cases}
\end{align}
Therefore Chen's formula becomes
\[ 
  \Delta_d(q^n-1, q^n+1)=-\sum_{\substack{i+j=2q^n\\q^n-1\leq j \leq 2q^n-2\\q-1|j}}S_d(i,j).
\]
Replacing $j$ with $2q^n-2-(j-1)(q-1)$, we have $q^n-1\leq 2q^n-2-(j-1)(q-1) \leq 2q^n-2$ and thus $1\leq j\leq \frac{q^n+q-2}{q-1}$. Therefore
\[
\Delta_d(q^n-1, q^n+1)=-\sum^{\frac{q^n+q-2}{q-1}}_{j=1}S_d\Bigl(2+(j-1)(q-1), 2q^n-2-(j-1)(q-1)\Bigr).
\]

Combining {\it Case 1} and {\it Case 2}, we obtain the equation \eqref{iii}. 
%
%
\end{proof}

As we saw in Remark \ref{re(4)}, the equation \eqref{iv} contains errors. We correct them as follows.  
\begin{thm}
\label{3}
For $d, n\in\mathbb{Z}_{\geq 1}$, we have
\begin{align}
\label{thm11}
\Delta_d(q^{n-1}, q^n+1)&={\rm Int}\biggl(\frac{2}{q}\biggr)S_d(2, q^n+q^{n-1}-1) \\
						    &\quad -\sum^{\frac{q^{n-1}-1}{q-1}}_{j=1}S_d\Bigl(3+(j-1)(q-1), q^n+q^{n-1}-2-(j-1)(q-1)\Bigr)\nonumber \\
						    &\quad +S_d(2q^{n-1}+1, q^n-q^{n-1}).\nonumber 
\end{align}
\end{thm}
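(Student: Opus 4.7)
The plan is to mimic the proofs of Theorems \ref{1} and \ref{2}: apply Chen's formula to $\Delta_d(q^{n-1}, q^n+1)$ and then use Lucas's theorem to decide which binomial coefficients survive modulo $p$. Chen's formula expresses $\Delta_d(q^{n-1}, q^n+1)$ as a sum over $i+j = q^{n-1}+q^n+1$ with $q-1 \mid j$ and $i,j \geq 1$, where the coefficient of $S_d(i,j)$ is
\[
(-1)^{q^n}\binom{j-1}{q^n} + (-1)^{q^{n-1}-1}\binom{j-1}{q^{n-1}-1}.
\]
I would split into the cases $q = 2$ and $q \geq 3$, as in Theorem \ref{1}. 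For $q$ odd this simplifies to $-\binom{j-1}{q^n} + \binom{j-1}{q^{n-1}-1}$ modulo $p$; for $q = 2^l$ the characteristic is $2$ and all signs reduce to $+1$.

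The key Lucas analysis is as follows. Because $j-1 \leq q^n + q^{n-1} - 1 < 2q^n$, the $p$-adic digit of $j-1$ at position $ln$ is either $0$ or $1$, so $\binom{j-1}{q^n} \equiv 1 \pmod p$ when $j \geq q^n+1$ and $\equiv 0$ otherwise. Similarly $\binom{j-1}{q^{n-1}-1} \equiv 1 \pmod p$ iff the $p$-adic digits of $j-1$ at positions $0, \ldots, l(n-1)-1$ are all $p-1$, equivalently iff $q^{n-1} \mid j$, and is $\equiv 0 \pmod p$ otherwise. I would then split the admissible range at $j = q^n$. In the range (a), $1 \leq j \leq q^n$, only the second binomial can survive, and combining $q^{n-1} \mid j$ with $q-1 \mid j$ (using $\gcd(q-1, q^{n-1}) = 1$) forces $(q-1)q^{n-1} \mid j$. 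For $q \geq 3$ the unique solution is $j = (q-1)q^{n-1} = q^n - q^{n-1}$, yielding the extra summand $S_d(2q^{n-1}+1, q^n-q^{n-1})$; for $q = 2$ the value $j = 2^n$ also contributes. In the range (b), $q^n+1 \leq j \leq q^n+q^{n-1}$, the first binomial always contributes $1$, and the second contributes $1$ only at $j = q^n+q^{n-1}$; this boundary value lies in the admissible set precisely when $q-1 \mid q^n+q^{n-1}$, i.e., when $q \in \{2,3\}$, and there the two contributions cancel modulo $p$.

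Finally I would re-index the surviving contributions in range (b) via $j = q^n+q^{n-1}-2 - (j_0-1)(q-1)$ for $j_0 = 1, \ldots, (q^{n-1}-1)/(q-1)$, which recovers exactly the summation in \eqref{thm11}. The boundary value $j = q^n+q^{n-1}-1$ (not covered by this re-indexing) lies in the admissible set only when $q-1 \mid 1$, i.e., $q = 2$, and then accounts for the ${\rm Int}(2/q)\, S_d(2, q^n+q^{n-1}-1)$ term. The main bookkeeping obstacle will be the case $q = 2$, where the range-$(a)$ contribution at $j = 2^n$ must be identified with the last summand ($j_0 = 2^{n-1}-1$) of the sum in \eqref{thm11}, so that the uniform formula correctly absorbs what is technically a range-$(a)$ contribution in disguise.
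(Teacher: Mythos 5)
Your proposal is correct and follows essentially the same route as the paper's proof: apply Chen's formula, use Lucas's theorem to decide which of the two binomial coefficients survive modulo $p$ on each range of $j$, and then re-index to match the stated sum, with the term $S_d(2q^{n-1}+1,q^n-q^{n-1})$ arising from the unique admissible $j=q^n-q^{n-1}$ below $q^n$. If anything, your explicit check of the boundary value $j=q^n+q^{n-1}$ (admissible exactly for $q\in\{2,3\}$, where the two contributions cancel) is handled more carefully than in the paper's Case 2.
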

\begin{proof} 
By $1\leq j\leq \frac{q^{n-1}-1}{q-1}$, we have 
\[
  q^n+q-2\leq q^n+q^{n-1}-2-(j-1)(q-1)\leq q^{n}+q^{n-1}-2.
\]
By replacing $q^n+q^{n-1}-2-(j-1)(q-1)$ by $j$, we see that it is enough to prove
\begin{align}
\label{okayama}  
\Delta_{d}(q^{n-1}, q^n+1)=&{\rm Int}\Bigl(\frac{2}{q}\Bigr)S_d(2, q^n+q^{n-1}-1)-\sum_{\substack{q^n+q-2\leq j\leq q^n+q^{n-1}-2\\ q-1|j\\ i+j=q^n+q^{n-1}+1}}S_d(i,j)\\
&+S_d(2q^{n-1}+1, q^n-q^{n-1}),\nonumber
\end{align} 
which is a reformulation of \eqref{thm11}.

We note that Chen's formula says 
\begin{align}
\label{gifu}  
\Delta_d(q^{n-1}, q^{n}+1)=\sum_{\substack{i+j=q^n+q^{n-1}+1\\q-1|j\\i,j\in\mathbb{Z}_{\geq1}}}\biggl\{ (-1)^{q^n}\binom{j-1}{q^n}+(-1)^{q^{n-1}-1}\binom{j-1}{q^{n-1}-1} \biggr\}S_d(i,j).
\end{align} 

{\it Case 1} (the case when $q=2$). 
The equation \eqref{gifu} becomes
\[
  \Delta_d(2^{n-1}, 2^n+1)=\sum_{i+j=2^{n-1}+2^n+1}\biggl\{\binom{j-1}{2^n}+\binom{j-1}{2^{n-1}-1}\biggr\}S_d(i,j).
\]

When $0\leq j-1<2^{n-1}-1$, it is easily seen that
\begin{align}\label{okinawa}
\binom{j-1}{2^n}+\binom{j-1}{2^{n-1}-1}=0+0=0.  
\end{align}

When $2^{n-1}-1 \leq j-1<2^n$, it is clear that 
\[
\binom{j-1}{2^n}=0. 
\]
By Lucas's theorem, we have
\[
  \binom{j-1}{2^{n-1}-1}\equiv \prod^{n-2}_{k=0}\binom{j_k}{1}\binom{j_{n-1}}{0} \ {\rm mod} \ 2
\]
where $j-1=j_0+j_12+\cdot +j_{n-1}2^{n-1}$ is the 2-adic expansion of $j-1$.
Therefore
\[ 
\binom{j-1}{2^{n-1}-1}\equiv 1\ {\rm mod}\ 2 \Leftrightarrow j_k=1\ \text{for all}\ k\in \{ 0, 1,\ldots, n-2\} 
\]
Thus
\[
\binom{j-1}{2^{n-1}-1}\equiv 
\begin{cases}
 &1\ \text{if $j=2^{n-1}$ or $2^n$},\\
 &0\ \text{if $2^{n-1}<j<2^n$}.
\end{cases} 
\]
Therefore
\begin{align}\label{hokkaido}
\binom{j-1}{2^n-1}+\binom{j-1}{2^{n-1}-1}\equiv
\begin{cases}
 &1\ \ {\rm mod}\  2\ \text{if $j=2^{n-1}$\ or\ $2^n$ },\\
 &0\ \ {\rm mod}\  2\ \text{if $2^{n-1}<j<2^n$}.
\end{cases}
\end{align}

When $2^n\leq j-1\leq 2^n+2^{n-1}-1$, it is clear that 
by Lucas's theorem, 
\[
  \binom{j-1}{2^{n}}\equiv \prod^{n-1}_{k=0}\binom{j_k}{0}\binom{j_{n}}{1} \ {\rm mod} \ 2
\]
Then we have 
\[
  \binom{j-1}{2^{n}}\equiv 1 \ {\rm mod} \ 2 \Leftrightarrow j_n=1.
\]
By the condition $2^n\leq j-1\leq 2^n+2^{n-1}-1$, we have $j_n=1$. So we always have
\[
  \binom{j-1}{2^n}\equiv 1\ {\rm mod}\ 2\ \text{for all $j$ with $2^n\leq j-1\leq 2^n+2^{n-1}-1$}.
\]
On the other hand,   
\[
  \binom{j-1}{2^{n-1}-1}\equiv 1\ {\rm mod}\ 2 \Leftrightarrow j_k=1
\]
for all $k\in \{ 0, 1,\ldots, n-2\}$.
In this case, we have $j-1=2^n+2^{n-1}-1+j_{n-1}2^{n-1}$. By the condition  $2^n \leq j-1\leq 2^n+2^{n-1}-1$, it must be $j-1=2^n+2^{n-1}-1$. Then we obtain
\begin{align}\label{chiba}
  \binom{j-1}{2^n}+\binom{j-1}{2^{n-1}-1}\equiv
     \begin{cases} 
&1\ \ {\rm mod}\ 2\ \text{if $2^n+1\leq j \leq 2^n+2^{n-1}-1$},\\
&0\ \ {\rm mod}\ 2\ \text{if $j=2^n+2^{n-1}$.}
      \end{cases}
\end{align}

Therefore by \eqref{okinawa}, \eqref{hokkaido} and \eqref{chiba},
\begin{align*}
  \binom{j-1}{2^n}+\binom{j-1}{2^{n-1}-1}\equiv
     \begin{cases} &1\ \ {\rm mod}\ 2\ \text{if $j=2^{n-1}$ or $2^n\leq j \leq 2^n+2^{n-1}-1$},\\ 
                       &0\ \ {\rm mod}\ 2\ \text{if $1\leq j<2^{n-1}$, $2^{n-1}<j<2^n$ or $j=2^n+2^{n-1}$.}
      \end{cases}
\end{align*}
It concludes the following equation
\begin{align*}
  \Delta_d(2^{n-1}, 2^n+1)=
                                    &S_d(2, 2^n+2^{n-1}-1)+\sum_{\substack{2^n\leq j\leq2^n+2^{n-1}-2\\ i+j=2^n+2^{n-1}+1}}S_d(i,j)+S_d(2^n+1, 2^{n-1}).
\end{align*}
Thus we get the equation \eqref{okayama}.

{\it Case 2} (the case when $q=p^l\neq 2$ ). 
On coefficients of \eqref{gifu}, we have 
\[
   (-1)^{q^n}\binom{j-1}{q^n}+(-1)^{q^{n-1}-1}\binom{j-1}{q^{n-1}-1}=-\binom{j-1}{q^n}+\binom{j-1}{q^{n-1}-1}
\]
(we again note that above equation holds for $q=p^l$ with $p=2$ because the characteristic is $2$).

When $0< j-1\leq q^{n-1}-2$ with $q-1|j$, it is easily seen that 
\begin{align}
\label{shiga}
  -\binom{j-1}{q^n}+\binom{j-1}{q^{n-1}-1}=-0+0=0.
\end{align} 

When $q^{n-1}+q-3\leq j-1\leq q^n-2 $ with $q-1|j$, it is clear that
\begin{align*}
\binom{j-1}{q^n}=0.
\end{align*}
Set the $p$-adic expansions of $j-1$ and $q^{n-1}-1$ as follows
\begin{align*}
  j-1&=j_0+j_1p+\cdots +j_{ln-1}p^{ln-1},\\
  q^{n-1}-1&=p-1+(p-1)p+\cdots (p-1)p^{l(n-1)-1}.
\end{align*}
By Lucas's theorem, 
\[
  \binom{j-1}{q^{n-1}-1}\equiv \prod^{l(n-1)-1}_{k=0}\binom{j_k}{p-1}\prod^{ln-1}_{r=l(n-1)}\binom{j_r}{0} \ {\rm mod}\ p. 
\]
Therefore we obtain
\begin{align*}
 \binom{j-1}{q^{n-1}-1}\nequiv 0\ {\rm mod}\ p \Leftrightarrow j_k=p-1 
\end{align*}
where for all $k\in \{ 0,1,\ldots, l(n-1)-1 \}$. In this case we have
\begin{align*}
  j-1&=q^{n-1}-1+j_{l(n-1)}p^{l(n-1)}+\cdots +j_{ln-1}p^{ln-1}\\
      &=q^{n-1}-1+p^{l(n-1)}(j_{l(n-1)}+j_{l(n-1)+1}p+\cdots +j_{ln-1}p^{l-1})\\
      &=q^{n-1}-1+q^{n-1}(j_{l(n-1)}+j_{l(n-1)+1}p+\cdots +j_{ln-1}p^{l-1}).
\end{align*}
Then we have $0\leq j_{l(n-1)}+j_{l(n-1)+1}p+\cdots +j_{ln-1}p^{l-1}\leq q-1$.
By the condition $q-1|j$, we have $j_{l(n-1)}+j_{l(n-1)+1}p+\cdots +j_{ln-1}p^{l-1}=q-2$. Thus $j=q^{n-1}+q^{n-1}(q-2)=q^{n}-q^{n-1}$, and in this case we have $\binom{j-1}{q^{n-1}-1}\equiv 1\ {\rm mod}\ p$. 
So we obtain
\[
\binom{j-1}{q^{n-1}-1}\equiv 
\begin{cases}
 &1\ \ {\rm mod}\ p\quad \text{if $j=q^n-q^{n-1}$},\\ 
 &0\ \ {\rm mod}\ p\quad \text{if $j\neq q^n-q^{n-1}$}. 
\end{cases}
\]
Therefore 
\begin{align}\label{akita}
-\binom{j-1}{q^{n}}+\binom{j-1}{q^{n-1}-1}\equiv 
\begin{cases}
&1\ \ {\rm mod}\ p\quad \text{if $j=q^n-q^{n-1}$},\\ 
&0\ \ {\rm mod}\ p\quad \text{if $j\neq q^n-q^{n-1}$}. 
\end{cases}
\end{align}

When $ q^n+q-3\leq j-1\leq q^n+q^{n-1}-3$ with $q-1|j$, put the $p$-adic expansion of $j-1$ as follows
\[
  j-1=j_0+j_1p+\cdots +j_{ln}p^{ln}.
\]
By Lucas's theorem, 
\[
  \binom{j-1}{q^n}\equiv \prod^{ln-1}_{k=0}\binom{j_k}{0}\binom{j_{ln}}{1}\ {\rm mod}\ p. 
\]
The condition $q^n+q-3\leq j-1\leq q^n+q^{n-1}-3$ implies that $j_{ln}=1$. Therefore, we have
\begin{align*}
\binom{j-1}{q^n}\equiv 1 \ {\rm mod}\ p
\end{align*}
for all $j$ with $q^n+q-3\leq j-1\leq q^n+q^{n-1}-3$ and $q-1|j$.
Again by Lucas's theorem, we have
\[
  \binom{j-1}{q^{n-1}-1}\equiv \prod^{l(n-1)-1}_{k=0}\binom{j_k}{p-1}\prod^{ln}_{r=l(n-1)}\binom{j_r}{0}\ {\rm mod}\ p.
\]
If $\binom{j-1}{q^{n-1}-1}\nequiv 0\ {\rm mod}\ p$, then $j_k=p-1$ for all $k\in\{0,1,\ldots, l(n-1)-1\}$. It means $j-1\equiv q^{n-1}-1\ \ {\rm mod}\ q^{n-1}.$ 
But it contradicts to $q^n+q-3\leq j-1\leq q^n+q^{n-1}-3$ 
(we note that here we use $q\neq2$).
Therefore
\begin{align*}
\binom{j-1}{q^{n-1}-1}\equiv 0\ {\rm mod}\ p
\end{align*}     
for all $j$ with $q^n+q-3 \leq j-1\leq q^n+q^{n-1}-3$ and $q-1|j$.
Then we have 
\begin{align}\label{kanagawa}
-\binom{j-1}{q^n}+\binom{j-1}{q^{n-1}-1}=-1+0=-1.
\end{align}
By \eqref{shiga}, \eqref{akita} and \eqref{kanagawa}, we have 
\begin{align*}
 (-1)^{q^n}\binom{j-1}{q^n}&+(-1)^{q^{n-1}-1}\binom{j-1}{q^{n-1}-1}\\
&\quad \equiv
\begin{cases} 
 &\ 0\ \ {\rm mod}\ p\quad \text{if $j\leq q^n-1$ with $q-1|j$ and $j\neq q^n-q^{n-1}$},\nonumber \\
 &\ 1\ \ {\rm mod}\ p\quad \text{if $j=q^n-q^{n-1}$},\\
 &-1\ \ {\rm mod}\ p\quad \text{if $q^n+q-2\leq j\leq q^n+q^{n-1}-2$ with $q-1|j$}.\\
\end{cases}
\end{align*}
Therefore we obtain \eqref{okayama} by \eqref{gifu}.
%

Combining the {\it Case 1} and {\it Case 2}, we obtain the equation \eqref{okayama}. Therefore the equation \eqref{thm11} follows.
\end{proof}

We correct the equation \eqref{v} as follows.
\begin{thm}
\label{4}
We set $d, n\in\mathbb{Z}_{\geq 1}$.  For $0\leq s\leq n$, the following equation holds
\begin{align}
\label{thm12}
\Delta_d(q^n+1, q^n-q^s+1)&={\rm Int}\biggl( \frac{2}{q} \biggr)S_d(2, 2q^n-q^s)\\
                                 &\quad -\sum^{\frac{q^n-q^s}{q-1}}_{j=1}S_d\Bigl(3+(j-1)(q-1), 2q^n-q^s-1-(j-1)(q-1)\Bigr)\nonumber \\
					&\quad +\sum^{\frac{q^n-1}{q-1}}_{j=\frac{q^n-q^s}{q-1}+1}S_d\Bigl(3+(j-1)(q-1), 2q^n-q^s-1-(j-1)(q-1)\Bigr).\nonumber
\end{align}
\end{thm}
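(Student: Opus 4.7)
The plan is to apply Chen's formula and then classify the resulting binomial coefficients modulo $p$ via Lucas's theorem, mirroring the strategy of Theorems \ref{1}, \ref{2}, and \ref{3}. The starting point is
\[
\Delta_d(q^n+1, q^n-q^s+1)=\sum_{\substack{i+j=2q^n-q^s+2\\q-1\mid j,\ i,j\geq1}}\Bigl\{(-1)^{q^n-q^s}\binom{j-1}{q^n-q^s}+(-1)^{q^n}\binom{j-1}{q^n}\Bigr\}S_d(i,j).
\]
Since $q^n$ and $q^s$ have the same parity (both $\equiv q \pmod 2$), $q^n-q^s$ is even, so for $p$ odd the bracket simplifies to $\binom{j-1}{q^n-q^s}-\binom{j-1}{q^n}$, while for $p=2$ all signs collapse to $+$.

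With the $p$-adic expansions $q^n=p^{ln}$ and $q^n-q^s=\sum_{k=ls}^{ln-1}(p-1)p^k$, and writing $j-1=\sum_k j_k p^k$, Lucas's theorem yields $\binom{j-1}{q^n}\equiv j_{ln}\pmod p$, and $\binom{j-1}{q^n-q^s}\not\equiv 0\pmod p$ if and only if $j_k=p-1$ for every $k\in\{ls,\ldots,ln-1\}$. The bound $j-1\leq 2q^n-q^s<2p^{ln}$ forces $j_{ln}\in\{0,1\}$. When $j_{ln}=0$ (i.e.\ $j\leq q^n$) the coefficient is $+1$ precisely when $j-1\in[q^n-q^s,q^n-1]$. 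When $j_{ln}=1$ (i.e.\ $q^n<j\leq 2q^n-q^s+1$) the coefficient is $-1$ for every admissible $j$ except the single exceptional value $j-1=2q^n-q^s$, at which both binomials equal $1$ modulo $p$ and cancel.

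For $q=p^l$ with $p$ odd, intersecting the first range with $q-1\mid j$ produces the values $q^n-q^s+(q-1),\ldots,q^n-1$; the substitution $j=2q^n-q^s-1-(u-1)(q-1)$ sends these to $u\in\bigl[\frac{q^n-q^s}{q-1}+1,\frac{q^n-1}{q-1}\bigr]$, recovering the second sum of \eqref{thm12}. Similarly the second range yields $q^n+q-2,\ldots,2q^n-q^s-1$, corresponding to $u\in\bigl[1,\frac{q^n-q^s}{q-1}\bigr]$, i.e.\ the first sum. The ${\rm Int}(2/q)$ term vanishes, and one checks that the exceptional $j=2q^n-q^s+1$ either violates $q-1\mid j$ (for $q\geq 4$) or falls outside both sum-ranges (for $q=3$). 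The $q=2$ case proceeds identically after collapsing signs modulo $2$, except that the additional index $j=2^{n+1}-2^s$ (so $i=2$), admissible precisely because $q-1=1$, survives with coefficient $+1$ and contributes the Int term. The main subtlety will be the boundary analysis at $j-1=2q^n-q^s$, together with the separate treatment required for $q=2$.
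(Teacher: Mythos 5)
Your proposal is correct and follows essentially the same route as the paper: apply Chen's formula, evaluate the binomial coefficients modulo $p$ via Lucas's theorem, classify the surviving indices into a ``$+1$'' range, a ``$-1$'' range and the exceptional boundary value, and reindex. The only difference is organizational --- you sort indices by the leading $p$-adic digit $j_{ln}$ and treat $q=2$, $s=0$, $s=n$ uniformly, whereas the paper reduces to an intermediate identity first and then splits into explicit sub-intervals with separate treatments of those cases --- but the underlying computation is identical.
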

We remark that when $s=0$ (resp. $s=n$), the third term (resp. the second term) of the right hand side of \eqref{thm12} means the empty sum.  
We note that in the case when $s=0$, it recovers \eqref{ii}.
\begin{proof}
We have $q^n+q-q^s-1\leq 2q^n-q^s-1-(j-1)(q-1)\leq 2q^n-q^s-1$ when $1\leq j\leq \frac{q^n-1}{q-1}$. Replacing $2q^n-q^s-1-(j-1)(q-1)$ with $j$, we see it is enough to prove 
\begin{align}
\label{tokyo}  
\Delta_d(q^n+1, q^n+1-q^s)=&{\rm Int}\biggl( \frac{2}{q} \biggr)S_d(2, 2q^n-q^s)\\
                                      &-\sum_{\substack{q^n+q-2 \leq j\leq 2q^n-q^s-1\\ i+j=2q^n-q^s+2\\ q-1|j }}S_d(i,j)+\sum_{\substack{q^n-q^s+q-1\leq j\leq q^n-1 \\i+j=2q^n-q^s+2\\q-1|j }}S_d(i,j).\nonumber
\end{align}

{\it Case 1} (the case when $q=2$). 
Chen's formula becomes
\[
  \Delta(2^n+1, 2^n-2^s+1)=\sum_{i+j=2^{n+1}-2^s+2}\biggl\{ \binom{j-1}{2^n-2^s}+\binom{j-1}{2^n} \biggr\}S_d(i,j).
\]

When $0\leq j-1 <2^n-2^s$, it is easily seen that 
\begin{align}\label{nigata}
\binom{j-1}{2^n-2^s}+\binom{j-1}{2^n}=0+0=0.
\end{align}

When $ 2^n-2^s \leq j-1<2^n$, it is clear that
\[ 
\binom{j-1}{2^n}=0.
\] 
We put the $2$-adic expansion of $j-1$ by
\[
  j-1=j_0+j_1\cdot 2+\cdots +j_{n-1}\cdot 2^{n-1}. 
\]
By Lucas's theorem, 
\[
  \binom{j-1}{2^n-2^s}\equiv \prod^{s-1}_{k=0}\binom{j_k}{0}\prod_{r=s}^{n-1}\binom{j_r}{1} \ {\rm mod}\ 2.  
\]
Then we have
\[
\binom{j-1}{2^n-2^s}\equiv 1 \ {\rm mod}\ 2 \Leftrightarrow j_r=1
\]
for all $r\in \{s, s+1, \ldots, n-1\}$.
And if $j_r=1$ for all $r\in \{s, s+1, \ldots, n-1\}$, we have
\begin{align*}
  j-1&=j_0+j_1\cdot 2 +\cdots +j_{s-1}\cdot 2^{s-1}+2^s+2^{s+1}+\cdots +2^{n-1}\\
      &=j_0+j_1\cdot 2+\cdots+j_{s-1}\cdot 2^{s-1}+2^n-2^s. 
\end{align*}
So, when $2^n-2^s\leq j-1<2^n$, we always have 
\begin{align*}
  \binom{j-1}{2^n-2^s}\equiv 1\ {\rm mod}\ 2.
\end{align*}
Therefore
\begin{align}\label{nara}
\binom{j-1}{2^n-2^s}+\binom{j-1}{2^n}\equiv1\ \ {\rm mod}\ 2
\end{align}
for all $j$ with $2^n-2^s+1\leq j <2^n+1$.

When $2^n \leq j-1\leq 2^{n+1}-2^s$, put the $2$-adic expansion of $j-1$ by 
\[
j-1=j_0+j_1\cdot 2 +\cdots +j_n\cdot 2^n. 
\]
By Lucas's theorem, 
\begin{align*}
  &\binom{j-1}{2^n}\equiv \prod_{k=0}^{n-1}\binom{j_k}{0}\binom{j_n}{1} \ {\rm mod}\ 2\\
  &\binom{j-1}{2^n-2^s}\equiv \prod_{k=0}^{s-1}\binom{j_k}{0}\prod^{n-1}_{r=s}\binom{j_r}{1}\binom{j_n}{0}    \ {\rm mod}\ 2
\end{align*}
Then we obtain 
\begin{align*}
  &\binom{j-1}{2^n}\equiv 1 \ {\rm mod}\ 2 \Leftrightarrow j_n=1,\\
  &\binom{j-1}{2^n-2^s}\equiv 1 \ {\rm mod}\ 2 \Leftrightarrow j_r=1\ \text{for all $r \in \{s, s+1,\ldots, n-1\}.$}
\end{align*}
We always have $j_n=1$ because $2^n \leq j-1 \leq 2^{n+1}-2^s$ and $2^n\leq 2^{n+1}-2^s\leq 2^{n+1}-1$. So 
\begin{align*}
\binom{j-1}{2^n}\equiv 1 \ {\rm mod}\ 2
\end{align*}
for all $j$ with $ 2^n\leq j-1\leq 2^{n+1}-2^{s}$.
While if $j_r=1$ for all $s \leq r\leq n-1$, 
\begin{align*}
  j-1&=j_0+j_1\cdot 2+\cdots +j_{s-1}\cdot2^{s-1}+2^n-2^s+2^n\\
      &=j_0+j_1\cdot 2+\cdots +j_{s-1}\cdot2^{s-1}+2^{n+1}-2^s
\end{align*}  
because $j_n=1$. Thus we have $j_0=j_1=\cdots=j_{s-1}=0$ by the condition  $2^n \leq j-1 \leq 2^{n+1}-2^s $ and hence $j-1=2^{n+1}-2^s$. So 
\begin{align*}
  \binom{j-1}{2^n-2^s}\equiv 1\ {\rm mod}\ 2  \Leftrightarrow j=2^{n+1}-2^s+1.
\end{align*}
Then we have
\begin{align}\label{wakayama}
  \binom{j-1}{2^n}+\binom{j-1}{2^n-2^s}\equiv
\begin{cases}
  &1\ \ {\rm mod}\ 2\quad \text{if $2^n+1\leq j \leq 2^{n+1}-2^s$. }\\
  &0\ \ {\rm mod}\ 2\quad \text{if $j=2^{n+1}-2^s+1$.}
\end{cases}
\end{align}

Therefore by \eqref{nigata}, \eqref{nara} and \eqref{wakayama},
\[
  \binom{j-1}{2^n}+\binom{j-1}{2^n-2^s}\equiv
\begin{cases}
  &1\ \ {\rm mod}\ 2\quad \text{if $2^n-2^s+1\leq j \leq 2^{n+1}-2^s$. }\\
  &0\ \ {\rm mod}\ 2\quad \text{if $1\leq j<2^n-2^s$ or $j=2^{n+1}-2^s+1$.}
\end{cases}
\]
It concludes that we obtain
\begin{align}
\Delta_d(2^n+1, 2^n-2^s+1)&=\sum_{\substack{2^n-2^s+1\leq j\leq 2^{n+1}-2^s \\i+j=2^{n+1}-2^s+2 }}S_d(i,j)\\
                                    &=S_d(2, 2^{n+1}-2^s)+\sum_{\substack{2^n-2^s+1\leq j\leq 2^{n+1}-2^s-1\\i+j=2^{n+1}-2^s+2 }}S_d(i,j).\nonumber
\end{align}
This corresponds to \eqref{tokyo} for $q=2$.   

{\it Case 2} (the case when $q=p^l\neq 2$ ). 
Chen's formula says
\begin{align*}
  \Delta_d(q^n+1, q^n-q^s+1)&=\sum_{\substack{i+j=2q^n-q^s+2\\q-1|j }}\biggl\{(-1)^{q^n-q^s}\binom{j-1}{q^n-q^s}+(-1)^{q^n}\binom{j-1}{q^n}  \biggr\}S_d(i,j).
\end{align*}
We have
\[
  (-1)^{q^n-q^s}\binom{j-1}{q^n-q^s}+(-1)^{q^n}\binom{j-1}{q^n}=\binom{j-1}{q^n-q^s}-\binom{j-1}{q^n}
\]
(we note that the above equation holds for $q=p^l$ with $p=2$ because the characteristic is $2$).

When $s=0$, we have \eqref{thm12} because it is equivalent to \eqref{ii}. 

When $s=n$, Chen's formula becomes
\[
 \Delta_d(q^n+1, 1)=\sum_{\substack{i+j=q^n+2\\q-1|j}}\biggl\{\binom{j-1}{0}-\binom{j-1}{q^n} \biggr\}S_d(i,j).
\] 
It is easily seen that 
\[
\binom{j-1}{0}-\binom{j-1}{q^n}=1-0=1 
\]
for all $j$ with $q-1\leq j\leq q^n-1$ and $q-1|j$. Thus we have
\[
  \Delta_d(q^n+1, 1)=\sum_{\substack{q-1\leq j \leq q^n-1 \\i+j=q^n+2 \\q-1|j}}S_d(i,j).
\] 
Hence we get \eqref{tokyo} and therefore the equation \eqref{thm12} holds in this case.

So we may assume that $1\leq s\leq n-1$.


%

When $0 < j-1 \leq q^n-q^s-1$ with $q-1|j$ , it is easily seen that 
\begin{align}\label{saitama}
  \binom{j-1}{q^n-q^s}-\binom{j-1}{q^n}=0-0=0.
\end{align}

When $ q^n-q^s+q-2 \leq j-1\leq q^n-2$ with $q-1|j$, it is clear that $\binom{j-1}{q^n}=0$. In this case, we put the $p$-adic expansion of $j-1$ and $q^n-q^s$ by
\begin{align*}
  j-1&=j_0+j_1\cdot p+\cdots +j_{ln-1}p^{ln-1},\\
q^n-q^s&=(q^n-1)-(q^s-1)=(p-1)p^{ls}+(p-1)p^{ls+1}+\cdots +(p-1)p^{ls-1}.
\end{align*}
 Applying Lucas's theorem, 
\[
  \binom{j-1}{q^n-q^s}\equiv \prod_{k=0}^{ls-1}\binom{j_k}{0}\prod_{m=ls}^{ln-1}\binom{j_m}{p-1}\ {\rm mod}\ p. 
\]
Then we have
\[   
  \binom{j-1}{q^n-q^s}\equiv 1\ {\rm mod}\ p \Leftrightarrow j_m=p-1\ \text{for all $m\in \{ls, ls+1, \ldots, ln-1\}$.} 
\]
If $j_m=p-1$ for all $m \in \{ls, ls+1, \ldots, ln-1\}$, we obtain 
\begin{align*}
  j-1&=j_0+j_1p+\cdots +j_{ls-1}p^{ls-1}+(p-1)p^{ls}+\cdots +(p-1)p^{ln-1}\\ 
      &=j_0+j_1p+\cdots +j_{ls-1}p^{ls-1}+q^n-q^s.
\end{align*}
Since we have $0 \leq j_0+j_1p+\cdots +j_{ls-1}p^{ls-1}\leq q^s-1$, 
we have $\binom{j-1}{q^n-q^s}\equiv 1\ {\rm mod}\ p$ for all $j$ with $q^n-q^s+q-2\leq j-1\leq q^n-2$.
Therefore we have
\begin{align}\label{tokushima}
\binom{j-1}{q^n-q^s}-\binom{j-1}{q^n}\equiv 1\ \ {\rm mod}\ p
\end{align}
for all $j$ with $q^n-q^s+q-2\leq j-1\leq q^n-2$ and $q-1|j$. 

When $q^n+q-3\leq j-1\leq 2q^n-q^s-2$ with $q-1|j$, we may put the $p$-adic expansion of $j-1$ by
\[
  j-1=j_0+j_1p+\cdots +j_{ln}p^{ln}.
\]
By using Lucas's theorem, 
\[
  \binom{j-1}{q^n}\equiv \prod_{k=0}^{ln-1}\binom{j_k}{0}\binom{j_{ln}}{1}\ {\rm mod}\ p.
\]
This shows 
\[
\binom{j-1}{q^n}\equiv 1\ {\rm mod}\ p \Leftrightarrow j_{ln}=1.
\]
 As $j-1$ satisfies $q^n+q-3=p^{ln}+p^{l}-3 \leq j-1\leq p^{ln}+p^{ln}-p^{ls}-2=2q^n-q^s-2$, we always have $j_{ln}=1$.
Thus we obtain
\[
  \binom{j-1}{q^n}\equiv 1\ {\rm mod}\ p
\]
for all $j$ with $q^n+q-3\leq j-1\leq 2q^n-q^s-2$ and $q-1|j$.
Whereas by Lucas's theorem,
\[
  \binom{j-1}{q^n-q^s}\equiv \prod_{k=0}^{ls-1}\binom{j_k}{0}\prod_{m=ls}^{ln-1}\binom{j_m}{p-1}\binom{j_{ln}}{0}\ {\rm mod}\ p
\]
and therefore 
\[
\binom{j-1}{q^n-q^s}\nequiv 0\ {\rm mod}\ p\Leftrightarrow j_m=p-1\ \text{for all $m\in \{ls, ls+1, \dots, ln-1\}$ }.
\]
If $\binom{j-1}{q^n-q^s}\nequiv 0\ {\rm mod}\ p$,
then $j_m=p-1$ for all $m\in \{ls, ls+1, \dots, ln-1\}$. It means $j-1=j_0+j_1p+\cdots +j_{ls-1}p^{ls-1}+q^{n}-q^s+q^n$ and thus $2q^n-q^s\leq j-1$. However this $j$ does not satisfy $q^n+q-3\leq j-1\leq 2q^n-q^s-2$. Thus we must have
\[
\binom{j-1}{q^n-q^s}\equiv 0\ {\rm mod}\ p
\]
for all $j$ with $q^n+q-3\leq j-1\leq 2q^n-q^s-2$ and $q-1|j$.

So it follows that
\begin{align}\label{toyama}
\binom{j-1}{q^n-q^s}-\binom{j-1}{q^n}\equiv -1\ {\rm mod}\ p
\end{align}
for all $j$ with $q^n+q-3\leq j-1\leq 2q^n-q^s-2$ and $q-1|j$.

Therefore by \eqref{saitama}, \eqref{tokushima} and \eqref{toyama}, we obtain 
\begin{align*}
  \binom{j-1}{q^n-q^s}-\binom{j-1}{q^n}\equiv 
\begin{cases}
&\ 0\ \ {\rm mod}\ p\ \text{if $1< j\leq q^n-q^s$ with $q-1|j$,}\\
&\ 1\ \ {\rm mod}\ p\ \text{if $q^n-q^s+q-1\leq j \leq q^n-1$ with $q-1|j$, }\\
&-1\ \ {\rm mod}\ p\ \text{if $q^n+q-2 \leq j\leq 2q^n-q^s-1$ with $q-1|j$. }
\end{cases}
\end{align*}
So \eqref{tokyo} holds in this case by Chen's formula.

Combining the {\it Case 1} and the {\it Case 2}, we have \eqref{tokyo}. Therefore \eqref{thm12} follows.  
\end{proof}

Summing all of the equation \eqref{ii}, \eqref{iii}, \eqref{thm11} and \eqref{thm12} over $d$, we obtain the following corollary.
\begin{cor}
The following 'harmonic product' formula holds for double zeta values in function fields:
\begin{align}
\zeta(q^n+1)\zeta(q^n)=&\zeta(q^n+1, q^n)+\zeta(q^n, q^n+1)+\zeta(2q^n+1)+{\rm Int}\biggl( \frac{2}{q} \biggr)\zeta(2, 2q^n-1)\\
& -\sum^{\frac{q^n-1}{q-1}}_{j=1}\zeta\Bigl(3+(j-1)(q-1), 2q^n-2-(j-1)(q-1)\Bigr),\nonumber
\end{align}
\begin{align}
\zeta(q^n-1)\zeta(q^n+1)=&\zeta(q^n-1, q^n+1)+\zeta(q^n+1, q^n-1)+\zeta(2q^n)\\
& -\sum^{\frac{q^n+q-2}{q-1}}_{j=1}\zeta\Bigl(2+(j-1)(q-1), 2q^n-2-(j-1)(q-1)\Bigr),\nonumber
\end{align}
\begin{align}
\zeta(q^{n-1})\zeta(q^n+1)=&\zeta_d(q^{n-1}, q^n+1)+\zeta(q^n+1, q^{n-1})+\zeta(q^n+q^{n-1}+1)\\
& +{\rm Int}\biggl(\frac{2}{q}\biggr)\zeta(2, q^n+q^{n-1}-1)\nonumber \\
						    & -\sum^{\frac{q^{n-1}-1}{q-1}}_{j=1}\zeta\Bigl(3+(j-1)(q-1), q^n+q^{n-1}-2-(j-1)(q-1)\Bigr)\nonumber \\
						    & +\zeta(2q^{n-1}+1, q^n-q^{n-1}),\nonumber
\end{align}
and for $0\leq s\leq n$,
\begin{align}
\zeta(q^n+1)\zeta(q^n+1-q^s)=&\zeta(q^n+1, q^n+1-q^s)+\zeta(q^n+1-q^s, q^n+1)+\zeta(2q^n+2-q^s)\\
&+{\rm Int}\biggl( \frac{2}{q} \biggr)\zeta(2, 2q^n-q^s)\nonumber \\
&-\sum^{\frac{q^n-q^s}{q-1}}_{j=1}\zeta\Bigl(3+(j-1)(q-1), 2q^n-q^s-1-(j-1)(q-1)\Bigr)\nonumber \\
					&+\sum^{\frac{q^n-1}{q-1}}_{j=\frac{q^n-q^s}{q-1}+1}\zeta\Bigl(3+(j-1)(q-1), 2q^n-q^s-1-(j-1)(q-1)\Bigr).\nonumber
\end{align}
\end{cor}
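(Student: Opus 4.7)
My plan is to follow the same template used in Theorems \ref{1}, \ref{2}, \ref{3}: apply Chen's formula to rewrite $\Delta_d(q^n+1, q^n-q^s+1)$ as a sum of $S_d(i,j)$ whose coefficients are explicit binomials modulo $p$, and then use Lucas's theorem to identify the ranges of $j$ on which those coefficients are $\equiv 0, +1, -1 \pmod{p}$. By re-indexing $j \mapsto 2q^n-q^s-1-(j-1)(q-1)$ as the author does throughout Section \ref{Ma}, it suffices to prove the reformulation
\begin{align*}
\Delta_d(q^n+1, q^n-q^s+1) &= \text{Int}\bigl(2/q\bigr) S_d(2, 2q^n-q^s) \\
&\quad -\sum_{\substack{q^n+q-2\,\leq\, j\,\leq\, 2q^n-q^s-1 \\ i+j=2q^n-q^s+2 \\ q-1\mid j}} S_d(i,j) \\
&\quad +\sum_{\substack{q^n-q^s+q-1\,\leq\, j\,\leq\, q^n-1 \\ i+j=2q^n-q^s+2 \\ q-1\mid j}} S_d(i,j).
\end{align*}
Chen's formula gives the coefficient of $S_d(i,j)$ as $(-1)^{q^n-q^s}\binom{j-1}{q^n-q^s}+(-1)^{q^n}\binom{j-1}{q^n}$, which in odd characteristic simplifies to $\binom{j-1}{q^n-q^s}-\binom{j-1}{q^n}$ (and becomes a sum in characteristic $2$, with the same downstream analysis).

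The first reductions dispose of boundary cases. When $s=0$, the identity is exactly \eqref{ii}, already established in Theorem \ref{1}. When $s=n$, we have $q^n-q^s=0$, so the first binomial is identically $1$ and the second vanishes for the range $q-1\leq j \leq q^n-1$ with $q-1\mid j$; this immediately yields the required sum. Thus I may restrict to $1\leq s\leq n-1$.

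For the main case, I would split $\{j : 1\leq j \leq 2q^n-q^s+1,\ q-1\mid j\}$ into three sub-intervals and evaluate $\binom{j-1}{q^n-q^s}$ and $\binom{j-1}{q^n}$ modulo $p$ via Lucas's theorem on the $p$-adic expansions, using the expansion $q^n-q^s=(p-1)p^{ls}+(p-1)p^{ls+1}+\cdots+(p-1)p^{ln-1}$:
(i) for $j-1 \leq q^n-q^s-1$, both binomials vanish trivially;
(ii) for $q^n-q^s+q-2 \leq j-1 \leq q^n-2$, the top binomial has bit-length $<ln$ so $\binom{j-1}{q^n}=0$, while the Lucas factorization forces the high digits $j_{ls},\ldots,j_{ln-1}$ to equal $p-1$ for non-vanishing of $\binom{j-1}{q^n-q^s}$, which is automatic in this range since the $q-1\mid j$ constraint picks out admissible tails, so the coefficient is $+1$;
(iii) for $q^n+q-3 \leq j-1 \leq 2q^n-q^s-2$, the digit $j_{ln}$ is forced to be $1$ so $\binom{j-1}{q^n}\equiv 1$, but non-vanishing of $\binom{j-1}{q^n-q^s}$ would force $j-1 \geq 2q^n-q^s$, contradicting the range, so the coefficient is $-1$. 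Matching these three conclusions against the two sums in the reformulation completes the proof, and the $q=2$ version is handled in parallel since the only difference is that signs collapse.

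The main technical obstacle is the bookkeeping in case (ii), where one must confirm that for every $j$ with $q^n-q^s+q-2\leq j-1 \leq q^n-2$ and $q-1\mid j$ the high-order digits $j_{ls},\ldots,j_{ln-1}$ are indeed all $p-1$. The key observation is that in this range $j-1\geq q^n-q^s$, so the base-$p$ expansion automatically has $p-1$ in positions $ls,\ldots,ln-1$, leaving only the low digits $j_0,\ldots,j_{ls-1}$ free; then the divisibility $q-1\mid j$ is consistent with a full interval of admissible low tails. This is precisely the same style of Lucas-theorem digit analysis carried out in the proofs of Theorems \ref{1}--\ref{3}, so no genuinely new obstacle is expected beyond careful enumeration.
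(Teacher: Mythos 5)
Your proposal does not prove the Corollary; it re-proves (part of) Theorem \ref{4}. The Corollary is a statement about the zeta values $\zeta(\cdot)$ and $\zeta(\cdot,\cdot)$, i.e.\ about sums over all degrees $d$, whereas everything in your argument stays at the level of a fixed $d$: Chen's formula, the Lucas-theorem digit analysis, and the reindexing $j\mapsto 2q^n-q^s-1-(j-1)(q-1)$ all concern $\Delta_d(q^n+1,q^n-q^s+1)$, which is exactly the content of Theorem \ref{4} (already established in the paper). Moreover, you only address the fourth identity; the first three identities of the Corollary, which rest on \eqref{ii}, \eqref{iii} and \eqref{thm11}, are not touched at all.

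The missing idea is the passage from the $\Delta_d$-identities to the 'harmonic product' formula. One sums each of \eqref{ii}, \eqref{iii}, \eqref{thm11}, \eqref{thm12} over $d\geq 0$ and uses the decomposition
\begin{align*}
\zeta(a)\zeta(b)&=\sum_{d_1>d_2}S_{d_1}(a)S_{d_2}(b)+\sum_{d_1<d_2}S_{d_1}(a)S_{d_2}(b)+\sum_{d}S_d(a)S_d(b)\\
&=\zeta(a,b)+\zeta(b,a)+\sum_d S_d(a)S_d(b),
\end{align*}
together with $\Delta_d(a,b)=S_d(a)S_d(b)-S_d(a+b)$ and $\sum_d S_d(a+b)=\zeta(a+b)$, $\sum_d S_d(i,j)=\zeta(i,j)$. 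This gives $\sum_d\Delta_d(a,b)=\zeta(a)\zeta(b)-\zeta(a,b)-\zeta(b,a)-\zeta(a+b)$, and substituting the right-hand sides of the four theorems yields precisely the four displayed formulae. Without this summation step your argument, even if completed, establishes nothing beyond what Theorem \ref{4} already says.
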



\section*{Acknowledgments}
The author is deeply grateful to Professor H. Furusho for guiding him towards this topic. This paper could not have been written without his continuous encouragements. The author also gratefully acknowledges Professor J. A. Lara Rodr\'iguez for answering several questions which the author posed regarding \cite{La} Conjecture 2.8.

\end{document}